\newtheorem{defn}{Definition}[section]
\newcounter{alphthm}
\numberwithin{equation}{section}
\newtheorem{rem}{Remark}[section]
\newtheorem{thm}{Theorem}
\newtheorem{theorem}{Theorem}[section]
\newtheorem{prop}{Proposition}[section]
\newtheorem{lem}{Lemma}[section]
\newcommand{\be}{\begin{equation}}
\newcommand{\ee}{\end{equation}}
\newcommand{\ben}{\begin{enumerate}}
\newcommand{\een}{\end{enumerate}}
\newcommand{\beq}{\begin{eqnarray}}
\newcommand{\eeq}{\end{eqnarray}}
\newcommand{\beqn}{\begin{eqnarray*}}
\newcommand{\eeqn}{\end{eqnarray*}}
\newcommand{\bpf}{\begin{proof}}
\newcommand{\epf}{\end{proof}}
\newcommand{\bl}{\begin{lem}}
\newcommand{\el}{\end{lem}}
\newcommand{\bp}{\begin{prop}}
\newcommand{\ep}{\end{prop}}
\newcommand{\bd}{\begin{defn}}
\newcommand{\ed}{\end{defn}}
\newcommand{\bt}{\begin{thm}}
\newcommand{\et}{\end{thm}}
\newcommand\bpr{\begin{prop}}
\newcommand\epr{\end{prop}}
\title{The Axiom of Spheres in Finsler Geometry}
\author{M. K. Sedaghat and B. Bidabad\thanks{The corresponding author, bidabad@aut.ac.ir}}
\date{}
\begin{document}
\maketitle
\noindent

\begin{abstract}
 Here, an axiom of spheres in Finsler geometry is proposed and it is proved that if a Finslerian manifold satisfies the axiom of spheres then it is of constant flag curvature.
 \end{abstract}
\vspace{.5cm}
{\footnotesize\textbf{Keywords}: Finsler, extrinsic sphere, axiom of sphere,  Weingarten, umbilical.}\\
{\footnotesize\textbf{AMS subject classification}: {53C60, 58B20}}
\section*{Introduction}
In Riemannian geometry, E. Cartan defined an axiom of $r$-planes as follows. \emph{A Riemannian manifold $M$ of dimension $n\geq3$ satisfies the axiom of $r$-planes, where $r$ is a fixed integer $2\leq r< n$, if for each point $p$ of $M$ and any $r$-dimensional subspace $S$ of the tangent space $T_{p}M$ there exists an $r$-dimensional totally geodesic submanifold $V$ containing $p$ such that $T_{p}V=S$.} He proved that if $M$ satisfies the axiom of $r$-planes for some $r$, then $M$ has constant sectional curvature, cf., \cite{C}. The axiom of $r$-spheres in Riemannian geometry was proposed by D. S. Leung and K. Nomizu as follows. \emph{For each point $p$ of $M$ and any $r$-dimensional subspace $S$ of $T_{p}M$, there is an $r$-dimensional umbilical submanifold $V$ with parallel mean curvature vector field such that $p\in V$ and $T_{p}V=S$.} They proved that if a Riemannian manifold $M$ of dimension $n\geq3$ satisfies the axiom of $r$-spheres for some $r$, $2\leq r< n$, then $M$ has constant sectional curvature, cf., \cite{LN}.
In \cite{AZ}, Akbar-Zadeh extends the Cartan's axiom of 2-planes to Finsler geometry as follows. \emph{A Finslerian manifold $M$ of dimension $n\geq3$ satisfies the axiom of 2-planes if for each point $p\in M$ and every subspace $E_{2}$ of dimension two of $T_{p}M$ there exists a totally geodesic surface $S$ passing through $p$ such that $T_{p}S=E_{2}$.} He proved that every Finsler manifold satisfying the axiom of 2-planes is of constant flag curvature, cf., \cite{AZ}, page 182.

Recently, a definition of circle in Finsler spaces is introduced by one of the present authors in a joint work with Z. Shen, cf., \cite{BS}.  Based on the definition of a circle  we will show later  that a connected  submanifold of a Finsler manifold is an \emph{extrinsic sphere} if and only if its circles coincide with circles of the ambient manifold. The proof will appear elsewhere.\\

 In the present work,  we propose in a natural way, the following axiom of $r$-spheres in Finsler geometry.\\
\\
\emph{{\bf{Axiom of $r$-spheres.}} Let $(M,F)$ be a Finsler manifold of dimension $n\geq3$. For each point $x$ in $M$ and any $r$-dimensional subspace $E_{r}$ of $T_{x}M$, there exists an $r$-dimensional umbilical submanifold $S$ with parallel mean curvature vector field such that $x\in S$ and $T_{x}S=E_{r}$.}\\
\\
We shall prove the following theorem.
\begin{thm}\label{main1}
If a Finsler manifold of dimension $n\geq3$ satisfies the axiom of $r$-spheres for some $r$, $2\leq r< n$, then $M$ has constant flag curvature.
\end{thm}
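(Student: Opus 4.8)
The plan is to mimic the classical Riemannian Leung--Nomizu argument, transplanted into the Finslerian framework and combined with Akbar-Zadeh's axiom-of-$2$-planes theorem cited above. The key structural observation is that an umbilical submanifold $S$ with parallel mean curvature vector field is exactly the notion of an \emph{extrinsic sphere}, and that its second fundamental form is, by umbilicity, completely controlled by the mean curvature vector. So I would first fix a point $x\in M$ and an $r$-plane $E_r\subseteq T_xM$, invoke the axiom to produce the extrinsic sphere $S$ through $x$ with $T_xS=E_r$, and then extract a curvature identity by feeding the second fundamental form of $S$ into the appropriate Finslerian Gauss--Codazzi-type equations. The role of the axiom is that the plane $E_r$ is \emph{arbitrary}, so whatever relation I derive must hold for every $r$-plane at every point.

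The main computational step is to write the umbilicity condition in the form $\mathrm{II}(X,Y)=g(X,Y)\,\xi$ for tangent vectors $X,Y\in T_xS$ and mean curvature vector $\xi$, and to impose $\nabla^{\perp}\xi=0$ (parallel mean curvature). Substituting this into the Codazzi equation forces the normal component of the ambient flag curvature operator, evaluated on vectors of $E_r$ against a normal direction, to vanish; this is the Finslerian analogue of the statement that an extrinsic sphere sits inside a space in which $R(X,Y)Z$ has no normal part when $X,Y,Z$ are tangent. The hard part will be handling the non-Riemannian terms that appear because the Finsler connection (say the Chern or Berwald connection) has nonvanishing Cartan torsion, so the Gauss and Codazzi equations acquire extra terms involving the Cartan tensor and its covariant derivatives; I expect these terms to be the genuine obstacle, and I would need the umbilicity plus parallel-mean-curvature hypotheses to be exactly strong enough to kill them, paralleling how the Riemannian proof uses both conditions rather than umbilicity alone.

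Once the curvature constraint is in hand, the strategy is to reduce to the already-established case. Since $r\geq 2$, every $r$-plane contains a $2$-plane, and an extrinsic sphere is in particular totally geodesic in directions where the mean curvature contribution cancels; more precisely I would show that the constraint derived above, holding for all $r$-planes, implies that the flag curvature $K(x,y,\Pi)$ is independent of the flagpole $y$ and the transverse direction defining the $2$-section $\Pi$. The cleanest route is to argue that the axiom of $r$-spheres \emph{implies} the axiom of $2$-planes: from an arbitrary $2$-plane one builds, using the spheres supplied by the axiom, a totally geodesic surface tangent to it (the mean curvature vector, being parallel and arising from an umbilical immersion, can be shown to force the relevant totally geodesic structure on a suitable codimension reduction). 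Then Theorem of Akbar-Zadeh (\cite{AZ}, p.~182) applies verbatim and yields constant flag curvature.

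Alternatively, if the reduction to the axiom of $2$-planes turns out to be awkward, I would prove isotropy of the flag curvature directly: the pointwise constancy, namely that $K(x,y,\Pi)$ depends only on $x$, follows from letting $E_r$ range over all $r$-planes and comparing the curvature identities, and then a Schur-type lemma in Finsler geometry promotes pointwise constancy to global constancy for $n\geq 3$. I anticipate the Finslerian Schur argument and the control of the Cartan-tensor terms to be where the real work lies, while the overall skeleton follows the Riemannian template closely.
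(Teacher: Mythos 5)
Your overall skeleton --- produce the extrinsic sphere from the axiom, feed umbilicity plus parallel mean curvature into a Finslerian Codazzi equation to get a curvature identity of the form $g(R(X,Y)Z,X)=0$ for orthonormal $X,Y$ tangent and $Z$ normal, then conclude by a polarization and a Schur-type theorem --- is exactly the route the paper takes (your ``alternative'' second route, combined with Akbar-Zadeh's generalization of Schur's theorem for the sectional curvature $K_2$ of the Cartan connection). However, your \emph{primary} proposed route is a wrong turn: you cannot in general deduce the axiom of $2$-planes from the axiom of $r$-spheres by ``forcing the relevant totally geodesic structure.'' An umbilical submanifold with parallel mean curvature is totally geodesic only when the mean curvature vector vanishes, and the axiom gives you no control over that; indeed, if such a reduction worked, the Leung--Nomizu theorem in the Riemannian case would be an immediate corollary of Cartan's axiom-of-planes theorem, which it is not. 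That whole paragraph should be discarded in favor of the direct curvature computation.

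The more serious issue is that the two places where you say ``the real work lies'' are precisely the places where you give no argument, and they are not routine. First, to make the Codazzi equation yield anything you must show $\nabla' A=0$ at $x$ in the directions $X$ and $Y$; for this the paper extends the chosen normal vector $Z$ to a field $W$ that is $\bar{\nabla}^{\perp}$-parallel along the radial minimizing geodesics of a normal neighborhood, and then uses metric compatibility together with $\bar{\nabla}^{\perp}\eta=0$ to show that $g(W,\eta)$ is \emph{constant on that whole neighborhood}, whence $A_W=g(W,\eta)I=\lambda I$ identically near $x$ and its covariant derivative vanishes. Working only at the single point $x$, as your sketch suggests, is not enough, since one must differentiate $A_W$. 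Second, the Finslerian $A$-Codazzi equation carries an extra torsion term $g(A_W(T^{D}({}^h\hat{X},{}^h\hat{Y})),X)$ coming from the associated linear connection $D$; you correctly flag the non-Riemannian terms as the obstacle but offer no mechanism to remove them. The paper kills this term not by any cancellation forced by umbilicity alone, but by observing that $A_W=g(W,\eta)I$ reduces it to $g(T^{D}({}^h\hat{X},{}^h\hat{Y}),X)\,g(W,\eta)$, and then exploiting the symmetry of $g(R(X,Y)W,X)$ under the interchange of $Y$ and $W$ together with $g(Y,\eta)=0$ (since $Y$ is tangent and $\eta$ normal) to conclude the product vanishes. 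Without these two arguments the proposal is a plausible plan rather than a proof.
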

\section{Notations and preliminaries on Finsler submanifolds}
Let $M$ be a real n-dimensional manifold of class $C^{\infty}$. We denote by $TM$ the tangent bundle of tangent vectors,  by  $p :TM_{0}\longrightarrow M$ the fiber bundle of non-zero tangent vectors and  by $p^*TM\longrightarrow TM_0$ the pulled-back tangent bundle.
Let $(x,U)$ be a local chart on  $ M$ and $(x^i,y^i)$ the induced local coordinates on $p^{-1}(U)$.  A \emph{Finsler structure} on M is a function $F: TM \longrightarrow [0,\infty )$, with the following properties:(i) $F$ is differentiable $C^{\infty}$ on $TM_{0}$; (ii) $F$ is positively homogeneous of degree one in $y$, that is,  $F(x,\lambda y)=\lambda F(x,y)$, for all $\lambda >0$;  (iii) The Finsler \emph{metric tensor} $g$ defined by the Hessian matrix of $F^{2}$, $(g_{ij})=(\frac{1}{2}[\frac{\partial^{2}}{\partial y^{i}\partial y^{j}}F^{2}])$, is positive definite on $TM_{0}$. A \emph{Finsler manifold} is a pair $(M,F)$ consisting of a differentiable manifold $M$ and a Finsler structure $F$ on $M$. We denote by $TTM_0$, the tangent bundle of $TM_0$ and by $\rho$, the canonical linear mapping $\rho:TTM_0\longrightarrow p^*TM,$ where, $ \rho=p_*$. There is the horizontal distribution $HTM$ such that we have the Whitney sum $TTM_0=HTM_{0}\oplus VTM_{0}.$ This decomposition permits to write a vector field $\hat{X}\in \chi(TM_0)$ into the horizontal and vertical parts in a unique manner, namely $\hat{X}=H\hat{X}+V\hat{X}$. In the sequel, we decorate the vector fields on $TM_0$ by hat, i.e. $\hat{X}$ and $\hat{Y}$ and the corresponding sections of $p^*TM$ by $X=\rho(\hat X)$ and $Y=\rho(\hat Y)$, respectively, unless otherwise specified, cf., \cite{AZ}. For all $X\in p^{*}TM$ we denote by $^h\!\hat{X}$ the horizontal lift of $X$ defined by the bundle morphism $\beta:p^{*}TM\longrightarrow HTM$ where, $\beta(\frac{\partial}{\partial x^i})=\frac{\delta}{\delta x^i}$, cf., \cite{A}. For another approach on geometry of Finslerian manifolds, one can refer to  \cite{BM}.
\subsection{Finsler geometry of submanifolds}
Let $(M,F)$ be a Finsler manifold and $S\subset M$ a $k$-dimensional submanifold defined by the immersion $i:S\longrightarrow M$. We identify any point $x\in S$ by its image $i(x)$ and any tangent vector $X\in T_{x}S$ by its image $i_{*}(X)$, where $i_{*}$ is the linear tangent mapping. Thus $T_{x}S$ becomes a sub-space of $T_{x}M$. Let $TS_{0}$ be the fiber bundle of non-zero tangent vectors on $S$. $TS_{0}$ is a sub-vector bundle of $TM_{0}$ and the restriction of $p$ to $TS_{0}$ is denoted by $q:TS_{0}\longrightarrow S$. We denote by $\bar{T}(S)=i^{*}TM$, the pull back induced vector bundle of $TM$ by $i$. The Finslerian metric $g$ on $TM_{0}$ induces a Finslerian metric on $TS_{0}$, where, we denote it again by $g$.
At a point $x=qz\in S$, where $z\in TS_{0}$, the orthogonal complement of $T_{qz}S$ in $\bar{T}_{qz}S$ is denoted by $N_{qz}S$, namely, $\bar{T}_{x}(S)=T_{x}(S)\oplus N_{qz}S,$ where $T_{x}(S)\cap N_{qz}S=0$. We have the following decomposition:
\begin{equation}\label{1}
q^{*}\bar{T}S=q^{*}TS\oplus N,
\end{equation}
where, $N$ is called the normal fiber bundle. If $TTS_{0}$ is the tangent vector bundle to $TS_{0}$, we denote by $\varrho$, the canonical linear mapping $\varrho:TTS_{0}\longrightarrow q^{*}TS$. Let $\hat{X}$ and $\hat{Y}$ be the two vector fields on $TS_{0}$. For $z\in TS_{0}$, $(\nabla_{\hat{X}}Y)_{z}$ belongs to $\bar{T}_{qz}S$. Attending to (\ref{1}) we have
\begin{equation}\label{2}
\nabla_{\hat{X}}Y=\bar{\nabla}_{\hat{X}}Y+\alpha(\hat{X},Y),\quad Y=\varrho(\hat{Y}),\quad X=\varrho(\hat{X}),
\end{equation}
where, $\nabla$ is the covariant derivative of Cartan connection and $\alpha(\hat{X},Y)$ the second fundamental form of the submanifold $S$. It belongs to $N$ and is bilinear in $\hat{X}$ and $Y$. It results from (\ref{2}) that the induced connection $\bar{\nabla}$ is a metric compatible covariant derivative with respect to the induced metric $g$ in the vector bundle $q^{*}TS\longrightarrow TS_{0}$.
\subsection{Shape operator or Weingarten formula}
Let $S$ be an immersed submanifold of $(M,F)$. For any $\hat{X}\in \chi(TS_{0})$ and $W\in \Gamma(N)$ we set
\begin{equation}\label{4}
\nabla_{\hat{X}}W=-A_{W}\hat{X}+\bar{\nabla}^{^\perp}_{\hat{X}}W,
\end{equation}
where, $A_{W}\hat{X}\in \Gamma(q^{*}TS)$ and $\bar{\nabla}^{^\perp}_{\hat{X}}W\in \Gamma(N)$ and we have partially used notations of \cite{BF}. It follows that $\bar{\nabla}^{^\perp}$ is a linear connection on the normal bundle $N$. We also consider the bilinear map
\begin{align*}
A:&\Gamma(N)\otimes\Gamma(TTS_{0})\longrightarrow \Gamma(q^{*}TS),\\
&A(W,\hat{X})=A_{W}\hat{X}.
\end{align*}
For any $W\in \Gamma(N)$, the operator $A_{W}:\Gamma(TTS_{0})\longrightarrow \Gamma(q^{*}TS)$ is called the \emph{shape operator} or the \emph{Weingarten map} with respect to $W$. Finally, (\ref{4}) is said to be the \emph{Weingarten formula} for the immersion of $S$ in $M$. We have
\begin{align*}
g(\alpha(^h\!\hat{X},Y),W)=g(A_{W}{^h\!\hat{X}},Y),
\end{align*}
where, $g$ is the Finslerian metric of $M$, $X,Y\in \Gamma(q^{*}TS)$ and $^h\!\hat{X}$ is the horizontal lift of $X$, cf., \cite{A}.
\subsection{Totally umbilical submanifolds in Finsler spaces}
The \emph{mean curvature} vector field $\eta$ of the isometric immersion $i:S\longrightarrow M$ is defined by
\begin{equation}\label{MC}
\eta=\frac{1}{n}tr_{g}\alpha(^h\!\hat{X},Y),
\end{equation}
where, $X,Y\in \Gamma(q^{*}TS)$ and $^h\!\hat{X}$ is the horizontal lift of $X$, cf., \cite{A}. We say that the mean curvature vector field $\eta$ is parallel in all directions if $\bar{\nabla}^{^\perp}_{^h\!\hat{X}}\eta=0$ for all $X\in \Gamma(q^{*}TS)$.
\begin{defn}
\cite{A} A submanifold of a Finsler manifold is said to be totally umbilical, or simply umbilical, if it is equally curved in all tangent directions.
\end{defn}
More precisely, let $i:S\longrightarrow M$ be an isometric immersion. Then $i$ is called totally umbilical if there exists a normal vector field $\xi \in N$ along $i$ such that its second fundamental form $\alpha$ with values in the normal bundle satisfies
\begin{equation}\label{TU}
\alpha(^h\!\hat{X},Y)=g(X,Y)\xi,
\end{equation}
for all $X,Y\in \Gamma(q^{*}TS)$, where $^h\!\hat{X}$ is the horizontal lift of $X$. Equivalently, $S$ is umbilical in $M$ if $A_{W}=g(W,\xi)I$ for all $W\in\Gamma(N)$ where, $I$ is the identity transformation, cf., \cite{A}.
To give an example of a totally umbilical submanifold in Finsler space, we refer to a theorem on totally umbilical submanifolds given in \cite{HYZ}. There is shown that if $(\tilde{M}^{n+1},\tilde{\alpha}+\tilde{\beta})$ is a Randers space, where $\tilde{\alpha}$ is an Euclidean metric and $\tilde{\beta}$ is a closed 1-form, then any complete and connected n-dimensional totally umbilical submanifold of $(\tilde{M}^{n+1},\tilde{\alpha}+\tilde{\beta})$ must be either a plane or an Euclidean sphere. The latter case happens only when there exist a point $\tilde{x}_0$ and a function $\lambda(\tilde{x})$ on $M^{n+1}$ such that $\tilde{\beta}=\lambda(\tilde{x})d(\parallel \tilde{x}-\tilde{x}_0\parallel_{\tilde{\alpha}}^{2})$ and the sphere is centered at $\tilde{x}_0$.\\
\hspace{-0.6cm}{\bf{Example 1.1.}} \cite{HYZ} Let $(\tilde{M}^{n+1},\tilde{F})$ be a Randers space with $\tilde{F}=\tilde{\alpha}+\tilde{\beta}$, where,
\begin{equation}
\tilde{\alpha}=\sqrt{\sum_{k=1}^{n+1}(\tilde{y}^k)^{2}},\quad \tilde{\beta}
=\sum_{k=1}^{n+1}\frac{b\tilde{x}^{k}d\tilde{x}^{k}}{\sqrt{\sum_k(\tilde{x}^k)^{2}}},\quad \forall (\tilde{x},\tilde{y})\in T\tilde{M}_0,\nonumber
\end{equation}
 $b$ is a constant and $0<\mid b\mid<1$. One can see that $d\tilde{\beta}=0$. Let
\begin{equation}
M=\{\tilde{x}\in \tilde{M}^{n+1}:\sum_{k=1}^{n+1}(\tilde{x}^{k}-\tilde{x}_{0}^{k})^2=r^2\},\nonumber
\end{equation}
and $f:(M,F)\longrightarrow (\tilde{M}^{n+1},\tilde{F})$ be an isometric immersion where $F=\alpha+\beta$ such that
\begin{equation}
\alpha=\sqrt{\sum_{k=1}^{n+1}\frac{\partial f^k}{\partial x^i}\frac{\partial f^k}{\partial x^j}y^{i}y^{j}},\quad \beta=\sum_{k=1}^{n+1}\frac{\partial f^k}{\partial x^i}\frac{b\tilde{x}^{k}y^i}{\sqrt{\sum_k(\tilde{x}^k)^{2}}},\nonumber
\end{equation}
where, $(x,y)\in TM_0$. It is obvious that $\sum_{k=1}^{n+1}(f^k(x)-\tilde{x}^{k}_{0})\frac{\partial f^k}{\partial x^i}=0$ hence if $\tilde{x}_0=0$ then $\beta=0$. On the other hand from theorem mentioned above we see that $(M,F)$ is a totally umbilical submanifold of $(\tilde{M}^{n+1},\tilde{F})$ if $\tilde{x}_0=0$. Therefore if $\tilde{x}_0=0$, the Euclidean sphere $(M,\alpha)$ is a totally submanifold of Randers space $(\tilde{M}^{n+1},\tilde{\alpha}+\tilde{\beta})$. For more details on totally umbilical Finsler submanifolds one can refer to \cite{L}.
\begin{rem}\label{RIM}
Let $i:S\longrightarrow M$ be an isometric immersion. If $S$ is totally umbilical then the normal vector field $\xi$ is equal to the mean curvature vector field $\eta$.
\end{rem}

\subsection{Codazzi equation for Finsler submanifolds }
Consider a vector field $\hat{X}\in \Gamma(TTM_{0})$. We have locally $\hat{X}=X^{i}\frac{\delta}{\delta x^{i}}+\dot{X}^{i}\frac{\partial}{\partial y^{i}}$ where, $\{\frac{\delta}{\delta x^i},\frac{\partial}{\partial y^i}\}$ are horizontal and vertical bases of $TM$. Then one can define
\begin{align*}
Q:&\Gamma(TTM_{0})\longrightarrow \Gamma(TTM_{0}).\\
&Q\hat{X}:=\dot{X}^{i}\frac{\delta}{\delta x^{i}}+X^{i}\frac{\partial}{\partial y^{i}}.
\end{align*}
By means of the Cartan connection $\nabla$ on $(M,F)$ and the operator $Q$, one can define a linear connection on the manifold $TM_{0}$ by
\begin{equation*}
D_{\hat{X}}\hat{Y}:=\nabla_{\hat{X}}Y+Q\nabla_{\hat{X}}Q(H\hat{Y}),
\end{equation*}
for all $\hat{X},\hat{Y}\in \Gamma(TTM_{0})$. $D$ is called the \emph{associated linear connection} to $\nabla$ on $TM_{0}$. The torsion tensor field $T^{D}$ of $D$ is given by
\begin{equation*}
T^{D}(\hat{X},\hat{Y}):=\tau(\hat{X},\hat{Y})+Q(\nabla_{\hat{X}}Q(H\hat{Y})-\nabla_{\hat{Y}}Q(H\hat{X})-H[\hat{X},\hat{Y}]),
\end{equation*}
where, $\tau$ is the torsion tensor field of $\nabla$, cf., \cite{BF}. Let $R$ be the $hh$-curvature tensor of the Cartan connection $\nabla$, $\bar{\nabla}$ the induced connection on the submanifold $S$, $D$ the associated linear connection to the induced connection $\bar{\nabla}$ and $\bar{\nabla}^{^\perp}$ the linear connection on the normal bundle $N$. Let $A$ be the shape operator. One can define a covariant derivative $\nabla'$ of $A$ as follows, cf., \cite{BF}.
\begin{equation}\label{111}
(\nabla'_{\hat{X}}A)(W,\hat{Y}):=\bar{\nabla}_{\hat{X}}(A_{W}\hat{Y})-A_{\bar{\nabla}^{^\perp}_{\hat{X}}W}\hat{Y}-A_{W}(D_{\hat{X}}\hat{Y}),
\end{equation}
for any $\hat{X},\hat{Y}\in \Gamma(TTS_{0})$ and $W\in\Gamma(N)$. The \emph{$A$-Codazzi equation} for the Finsler submanifold $S$ with respect to the connection $\nabla$ on Finsler manifold $(M,F)$ is written
\begin{align}\label{codazzi}
g(R(X,Y)W,Z)&=g((\nabla'_{H\hat{Y}}A)(W,H\hat{X})-(\nabla'_{H\hat{X}}A)(W,H\hat{Y}),Z)\nonumber\\&-g(A_{W}(T^{D}(H\hat{X},H\hat{Y})),Z),
\end{align}
where, $W\in\Gamma(N)$, $X=\varrho(\hat{X})$, $Y=\varrho(\hat{Y})$ and $X,Y,Z\in \Gamma(q^{*}TS)$, cf., \cite{BF}, page 84.
\subsection{Sectional and flag curvatures}
Let $G_{2}(M)$ be the fiber bundle of 2-planes on $M$. Denote by $\pi^{-1}G_{2}(M)\longrightarrow SM$ the fiber induced on $SM$ by $\pi:SM\longrightarrow M$, where $SM$ is the unit sphere bundle. Let $P\in \pi^{-1}G_{2}(M)$ be a 2-plane generated by vectors $X,Y\in T_{x}M$ linearly independent at $x=\pi z\in M$ where, $z\in SM$. By means of $hh$-curvature tensors of Berwald and Cartan connection Akbar-Zadeh defined two \emph{sectional curvatures} denoted by $K_1$ and $K_2$ respectively. Here in this work we are dealing with Cartan connection and related sectional curvature $K_{2}:\pi^{-1}G_{2}(M)\longrightarrow \mathbb{R}$ defined by
\begin{equation}\label{OOO}
K_{2}(z,X,Y)=\frac{g(R(X,Y)Y,X)}{\parallel X\parallel^{2}\parallel Y\parallel^{2}-g(X,Y)^{2}},
\end{equation}
where, $R$ is the $hh$-curvature tensor of Cartan connection. The scalar $K_{2}$ is called the \emph{sectional curvature} at $z\in SM$. If the vector field $Y$ is replaced by the canonical section $v$ then sectional curvature is called flag curvature and does not depend on the choice of connection. If we denote the flag curvature by $K$ then we have
\begin{equation*}
K_{2}(z,v,X)=K(z,v,X),
\end{equation*}
where, $v$ is the canonical section, cf., \cite{AZ}, page 156.\\
Akbar-Zadeh as a \emph{generalization of Schur's theorem} has proved the following theorem.
\begin{theorem}\label{SS}
\cite{AZ} $K_{2}(z,P)$ is independent of 2-plane $P(X,Y)$ (dim $M>2$) if and only if the curvature tensor $R$ of the Cartan connection satisfies
\begin{equation*}
R(X,Y)Z=K[g(Y,Z)X-g(X,Z)Y],
\end{equation*}
where $K$ is a constant and $X,Y,Z\in T_{x}M$.
\end{theorem}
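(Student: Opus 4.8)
The plan is to prove the biconditional in two halves: the implication from the algebraic form of $R$ to the independence of $K_{2}$ is a one-line computation, whereas the converse splits into a pointwise (algebraic) step and a differential (Schur-type) step. For the easy direction, assuming $R(X,Y)Z=K[g(Y,Z)X-g(X,Z)Y]$ with $K$ constant, I would substitute $Z=Y$ and pair with $X$ to obtain $g(R(X,Y)Y,X)=K[\|X\|^{2}\|Y\|^{2}-g(X,Y)^{2}]$; dividing by the denominator in (\ref{OOO}) gives $K_{2}(z,X,Y)=K$, which is visibly independent of the $2$-plane $P(X,Y)$.

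For the converse, suppose $K_{2}(z,X,Y)=K(z)$ depends only on $z\in SM$. The algebraic step is to recover the form of $R$ pointwise. I would introduce the $(0,4)$-tensor $T(X,Y,Z,W):=g(R(X,Y)Z,W)-K(z)[g(Y,Z)g(X,W)-g(X,Z)g(Y,W)]$, so that the hypothesis reads precisely $T(X,Y,Y,X)=0$ for all $X,Y$. Since the $hh$-curvature of the Cartan connection is skew in $(X,Y)$, skew in the last pair by metric compatibility of $\nabla$, and satisfies the first Bianchi identity, and since the model term $K(z)[g(Y,Z)g(X,W)-g(X,Z)g(Y,W)]$ enjoys exactly these same symmetries, the tensor $T$ carries all the algebraic symmetries of a curvature tensor. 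A standard polarization argument (replace $X$ by $X+\lambda Z$, expand, and feed in these symmetries) then upgrades the vanishing of $T$ on $(X,Y,Y,X)$ to $T\equiv 0$, i.e. $R(X,Y)Z=K(z)[g(Y,Z)X-g(X,Z)Y]$, with $K$ still only known to be a function on $SM$.

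The Schur step is to prove that $K$ is in fact constant. Here I would insert this special form of $R$ into the second Bianchi identity for the horizontal part of the Cartan connection. After applying metric compatibility the covariant derivatives of $K$ enter linearly, and because $\dim M\geq 3$ there are enough independent horizontal directions to isolate the horizontal gradient of $K$ and force it to vanish. The positive homogeneity of $F$, together with the link between $K_{2}$ and the connection-independent flag curvature (obtained by taking $Y$ to be the canonical section $v$), should then eliminate dependence on the flagpole direction $y$, so that $K$ is independent of both $x$ and $y$ and hence a genuine constant.

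The hard part will be precisely this Schur step in the Finsler category. Unlike the Riemannian case, the second Bianchi identity for the Cartan connection is not the clean cyclic identity but carries extra terms built from the Cartan tensor and the mixed $hv$-curvature, so I expect the main obstacle to be verifying that these corrections do not obstruct the conclusion $\nabla K=0$; equally delicate is ruling out variation of $K(z)$ along the fibers of $SM$, that is, its possible dependence on the flagpole. Controlling this direction-dependence through the homogeneity of $F$ and the structure equations of the Cartan connection is the genuinely Finslerian difficulty and is where the real work lies.
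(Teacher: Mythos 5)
The paper does not prove this statement at all: Theorem \ref{SS} is quoted verbatim from Akbar-Zadeh \cite{AZ} (his generalization of Schur's theorem) and is used as a black box in Lemma \ref{MMMM}. So there is no in-paper argument to compare yours against; what I can do is assess your outline on its own terms, and there it has two genuine gaps.

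First, your algebraic (polarization) step asserts that the $hh$-curvature of the Cartan connection ``satisfies the first Bianchi identity'' and hence that your tensor $T$ carries all the symmetries of a Riemannian curvature tensor. This is the false step. The Cartan connection has nonvanishing torsion (the Cartan tensor as $(h)hv$-torsion and the curvature $R^i_{jk}$ of the nonlinear connection as $(v)h$-torsion), so its first Bianchi identity is not the clean cyclic identity but carries correction terms built from these; consequently the pair symmetry $g(R(X,Y)Z,W)=g(R(Z,W)X,Y)$ is not automatic either. The standard polarization argument that recovers the full curvature tensor from its sectional values uses exactly these symmetries, so as written it does not go through; one must either track the torsion corrections explicitly or work with the weaker symmetry that actually holds (e.g.\ the symmetry of $g(R(X,Y)Z,X)$ in $Y$ and $Z$ invoked on page 187 of \cite{AZ}, which is what the present paper itself relies on in Lemma \ref{NNN}). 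Second, your Schur step is a statement of intent rather than a proof: you correctly identify that the second Bianchi identity acquires Cartan-tensor and $hv$-curvature terms and that the possible dependence of $K$ on the flagpole $y$ must be excluded, but you then declare this to be ``where the real work lies'' without doing it. Since those two points \emph{are} the theorem in the Finsler category, the proposal is a reasonable plan of attack but not a proof; the honest course here is what the paper does, namely cite \cite{AZ} for the full argument.
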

\section{Main results}
\begin{lem}\label{NNN}
Let $(M,F)$ be a Finsler manifold of dimension $n\geq 3$ satisfying the axiom of $r$-spheres for some $r$, $2\leq r< n$, then
\begin{equation*}
g(R(X,Y)Z,X)=0,
\end{equation*}
where, $X,Y,Z\in T_{x}M$ are three orthonormal vectors.
\end{lem}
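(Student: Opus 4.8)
The plan is to extract from the $A$-Codazzi equation (\ref{codazzi}), specialised to an umbilical submanifold with parallel mean curvature, the vanishing of the tangential component of $R(X,Y)W$ when $W$ is normal, and then to let the axiom of $r$-spheres choose a submanifold adapted to a prescribed orthonormal triple.

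First I would fix an umbilical submanifold $S$ through $x$ with parallel mean curvature, as supplied by the axiom. By the umbilical characterisation $A_{W}=g(W,\xi)I$ together with Remark \ref{RIM}, the umbilical normal is the mean curvature vector, $\xi=\eta$, so that $A_{W}(H\hat Y)=g(W,\xi)Y$. Inserting this into the definition (\ref{111}) and expanding $\bar\nabla_{H\hat X}(A_{W}H\hat Y)$ by the Leibniz rule, the factor $H\hat X\cdot g(W,\xi)$ splits, by compatibility of $\bar\nabla^\perp$ with $g$, into $g(\bar\nabla^\perp_{H\hat X}W,\xi)+g(W,\bar\nabla^\perp_{H\hat X}\xi)$. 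Parallelism of $\eta=\xi$ annihilates the second term, and the first cancels exactly against $A_{\bar\nabla^\perp_{H\hat X}W}H\hat Y=g(\bar\nabla^\perp_{H\hat X}W,\xi)Y$, leaving
\begin{equation*}
(\nabla'_{H\hat X}A)(W,H\hat Y)=g(W,\xi)\bigl(\bar\nabla_{H\hat X}Y-\varrho(D_{H\hat X}H\hat Y)\bigr).
\end{equation*}

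Next I would antisymmetrise in $\hat X,\hat Y$ and subtract the torsion term of (\ref{codazzi}). Writing $T^{D}(H\hat X,H\hat Y)=D_{H\hat X}H\hat Y-D_{H\hat Y}H\hat X-[H\hat X,H\hat Y]$, the $D$-contributions cancel against those hidden inside $T^{D}$, and the entire right-hand side of (\ref{codazzi}) reduces to
\begin{equation*}
g(R(X,Y)W,Z)=g(W,\xi)\,g\bigl(\bar\nabla_{H\hat Y}X-\bar\nabla_{H\hat X}Y+\varrho([H\hat X,H\hat Y]),Z\bigr).
\end{equation*}
The decisive point is that $\bar\nabla$ is the tangential part of the Cartan connection and the latter is $(h)h$-torsion free, so $\nabla_{H\hat X}Y-\nabla_{H\hat Y}X=\rho([H\hat X,H\hat Y])$; projecting onto $q^{*}TS$ and using that $\varrho$ is $\rho$ followed by tangential projection, the argument of $g(\cdot,Z)$ collapses to zero. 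Hence
\begin{equation*}
g(R(X,Y)W,Z)=0\qquad\text{for all }X,Y,Z\in\Gamma(q^{*}TS),\ W\in\Gamma(N).
\end{equation*}

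Finally, given orthonormal $X,Y,Z\in T_{x}M$, I would pick an $r$-dimensional subspace with $X,Y\in E_{r}\subseteq Z^{\perp}$; this is possible because $2\leq r<n$ gives $r\leq n-1=\dim Z^{\perp}$. The axiom produces an umbilical $S$ with parallel mean curvature and $T_{x}S=E_{r}$, so at $x$ the vectors $X,Y$ are tangent while $Z$ is normal. Applying the pointwise identity above with the normal field $W=Z$ and the remaining tangential argument equal to $X$ yields $g(R(X,Y)Z,X)=0$, which is the assertion.

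I expect the genuine obstacle to be the middle step, namely showing that the right-hand side of the $A$-Codazzi equation vanishes identically. Two separate mechanisms must cooperate: the parallelism of $\eta$, which removes every $\bar\nabla^\perp\xi$ contribution, and the vanishing of the horizontal torsion of the Cartan connection together with the compatibility of the projections $\rho$ and $\varrho$, which removes the bracket term. Tracking which objects live on $TS_{0}$ versus $TM_{0}$, and unwinding the operator $Q$ concealed in $D$ and in $T^{D}$, is where the computation demands the most care.
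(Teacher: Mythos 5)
Your setup and endgame coincide with the paper's: choose $E_{r}\subseteq Z^{\perp}$ containing $X,Y$, invoke the axiom, and feed the umbilicity $A_{W}=g(W,\eta)I$ and the parallelism of $\eta$ into the $A$-Codazzi equation (\ref{codazzi}); your pointwise Leibniz computation showing $(\nabla'_{{}^h\!\hat{X}}A)(W,{}^h\!\hat{Y})=0$ is essentially the paper's equations (\ref{222})--(\ref{555}), done slightly more cleanly since you let $g(\bar{\nabla}^{^\perp}_{{}^h\!\hat{X}}W,\xi)$ cancel against $A_{\bar{\nabla}^{^\perp}_{{}^h\!\hat{X}}W}$ instead of constructing a parallel extension of $Z$. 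The gap is in your middle step: you claim the torsion term $g(A_{W}(T^{D}({}^h\!\hat{X},{}^h\!\hat{Y})),Z)$ cancels by $(h)h$-torsion-freeness, so that $g(R(X,Y)W,Z)=0$ for \emph{every} tangential $Z$. This does not go through. The $T^{D}$ in (\ref{codazzi}) is the torsion of the linear connection $D$ associated to the \emph{induced} connection $\bar{\nabla}$ on $S$, and the induced connection on a Finsler submanifold is not the intrinsic Cartan connection of the induced metric: its horizontal torsion does not vanish in general. Moreover, even for the ambient connection, $[{}^h\!\hat{X},{}^h\!\hat{Y}]$ has a vertical component (the curvature of the nonlinear connection), and the operator $Q$ hidden inside $T^{D}$ converts that vertical component into a horizontal one which survives the projection $\varrho$ and hence contributes to $A_{W}(T^{D}({}^h\!\hat{X},{}^h\!\hat{Y}))$. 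Your identity $\nabla_{{}^h\!\hat{X}}Y-\nabla_{{}^h\!\hat{Y}}X=\rho([{}^h\!\hat{X},{}^h\!\hat{Y}])$ only controls the $\rho$-image of the bracket, not this $Q$-transported piece, so the right-hand side of (\ref{codazzi}) is not identically zero. Transplanting the Riemannian Leung--Nomizu cancellation wholesale is exactly what fails in the Finsler setting.

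The paper closes this hole by a different mechanism, which is the genuinely Finslerian part of the proof and is absent from your proposal. It keeps the torsion term, uses umbilicity to write it as
\begin{equation*}
g(A_{W}(T^{D}({}^h\!\hat{X},{}^h\!\hat{Y})),X)=g(T^{D}({}^h\!\hat{X},{}^h\!\hat{Y}),X)\,g(W,\eta),
\end{equation*}
and then exploits the symmetry of $g(R(X,Y)W,X)$ under the interchange $Y\leftrightarrow W$ (a curvature identity of the Cartan connection, cited from Akbar-Zadeh) to equate $g(T^{D}({}^h\!\hat{X},{}^h\!\hat{Y}),X)\,g(W,\eta)$ with $g(T^{D}({}^h\!\hat{X},{}^h\!\hat{W}),X)\,g(Y,\eta)$, which vanishes because $Y$ is tangent and $\eta$ is normal. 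Note that this trick only works when the fourth slot of the curvature is $X$ again -- which is precisely why the lemma asserts $g(R(X,Y)Z,X)=0$ and not the stronger $g(R(X,Y)Z,X')=0$ for arbitrary tangential $X'$ that your argument would deliver. To repair your proof you should abandon the claimed cancellation of $T^{D}$ and insert this symmetry argument instead.
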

\begin{proof} Let $(M,F)$ be a Finsler manifold which satisfies the axiom of $r$-spheres. Consider the Cartan connection $\nabla$ on the pulled-back bundle $p^{*}TM$, the induced connection $\bar{\nabla}$ on $S$ and the normal connection $\bar{\nabla}^{^\perp}$ on normal bundle. Let $X,Y$ and $Z$ be the three orthonormal vectors at $x=pz,z\in TM_0$. Consider the $r$-dimensional subspace $E_{r}$ of $T_{x}M$ which is normal to $Z$ and contains $X$ and $Y$. By assumption there exists an $r$-dimensional umbilical submanifold $S$ with parallel mean curvature vector field $\eta$ such that $x\in S$ and $T_{x}S=E_{r}$. It is well known for every point $x$ in a Finsler manifold there is a sufficiently small neighborhood $U$ on $M$ such that every pair of points in $U$ can be joined by a unique minimizing geodesic, see for instance \cite{BCS}, page 160. Hence there is a specific neighborhood $U$ of $x$ such that for each point $u\in U$ there exists a unique minimizing geodesic from $x$ to $u$. Let $W_{u}\in N_{u}S$ be the normal vector at $u$ which is parallel to $Z$ with respect to the normal connection $\bar{\nabla}^{^\perp}$ along the geodesic from $x$ to $u$ in $U$. The Finslerian metric $g$ on $TM_0$ defined by $F$, induces a Finslerian metric on $TS_0$, where we denote it again by $g$. By means of metric compatibility of Cartan connection, along each geodesic $\gamma$ from $x$ to any point in $U$ we have
\begin{equation}\label{ad}
\frac{d}{dt}g(W,\eta)=g(\nabla_{^h\!\hat{\dot{\gamma}}}W,\eta)+g(W,\nabla_{^h\!\hat{\dot{\gamma}}}\eta),
\end{equation}
where, $^h\!\hat{\dot{\gamma}}$ is the horizontal lift of the tangent vector field $\dot{\gamma}$. By means of the Weingarten formula (\ref{4}), rewrite (\ref{ad}) as follows
\begin{align}\label{add}
\frac{d}{dt}g(W,\eta)&=g(-A_{W}(^h\!\hat{\dot{\gamma}})+\bar{\nabla}^{^\perp}_{^h\!\hat{\dot{\gamma}}}W,\eta)+g(W,-A_{\eta}(^h\!\hat{\dot{\gamma}})+\bar{\nabla}^{^\perp}_{^h\!\hat{\dot{\gamma}}}\eta)\\
&=g(-A_{W}(^h\!\hat{\dot{\gamma}}),\eta)+g(\bar{\nabla}^{^\perp}_{^h\!\hat{\dot{\gamma}}}W,\eta)+g(W,-A_{\eta}(^h\!\hat{\dot{\gamma}}))+g(W,\bar{\nabla}^{^\perp}_{^h\!\hat{\dot{\gamma}}}\eta).\nonumber
\end{align}
Since $-A_{W}(^h\!\hat{\dot{\gamma}})$ and $-A_{\eta}(^h\!\hat{\dot{\gamma}})$ belong to $T_{x}S$ and on the other hand $\eta$ and $W$ are normal to $T_{x}S$ we have
\begin{equation*}
g(-A_{W}(^h\!\hat{\dot{\gamma}}),\eta)=g(W,-A_{\eta}(^h\!\hat{\dot{\gamma}}))=0.
\end{equation*}
By assumption the submanifold $S$ has parallel mean curvature vector field, that is, $\bar{\nabla}^{^\perp}_{^h\!\hat{\dot{\gamma}}}\eta=0$, hence $g(W,\bar{\nabla}^{^\perp}_{^h\!\hat{\dot{\gamma}}}\eta)=0$. By definition the vector $W$ is parallel along the geodesic $\gamma$ with respect to the normal connection $\bar{\nabla}^{^\perp}$, i.e. $\bar{\nabla}^{^\perp}_{^h\!\hat{\dot{\gamma}}}W=0$, hence  $g(\bar{\nabla}^{^\perp}_{^h\!\hat{\dot{\gamma}}}W,\eta)=0$. Therefore by means of (\ref{add}) we have $\frac{d}{dt}g(W,\eta)=0$ and $g(W,\eta)=\lambda$ is constant along each geodesic. Keeping in mind $S$ is a totally umbilical submanifold of $M$, we have $A_{W}=g(W,\eta)I=\lambda I$ at every point of $U$. Rewriting (\ref{111}) for the horizontal lift $^h\!\hat{X}$ of $X$ leads
\begin{equation}\label{222}
(\nabla'_{^h\!\hat{X}}A)(W,\hat{Y})=(\nabla^{*}_{^h\!\hat{X}}A_{W})(\hat{Y})-A_{\bar{\nabla}^{^\perp}_{^h\!\hat{X}}W}\hat{Y},
\end{equation}
where, we have put, $(\nabla^{*}_{^h\!\hat{X}}A_{W})(\hat{Y}):=\bar{\nabla}_{^h\!\hat{X}}(A_{W}\hat{Y})-A_{W}(D_{^h\!\hat{X}}\hat{Y})$ which can be considered as a covariant derivative of $A_{W}$. Plugging $A_{W}=\lambda I$ in the last equation leads
\begin{equation}\label{333}
\nabla^{*}_{^h\!\hat{X}}A_{W}=0.
\end{equation}
Similarly for the horizontal lift $^h\!\hat{Y}$ of $Y$ we have
\begin{equation}\label{444}
\nabla^{*}_{^h\!\hat{Y}}A_{W}=0.
\end{equation}
On the other hand, by means of metric compatibility of Cartan connection and the fact that $g(W,\eta)$ is constant we have $g(\nabla_{^h\!\hat{X}}W,\eta)+g(W,\nabla_{^h\!\hat{X}}\eta)=0$. By means of the Weingarten formula (\ref{4}) the last equation leads
\begin{equation}\label{LL}
g(-A_{W}(^h\!\hat{X}),\eta)+g(\bar{\nabla}^{^\perp}_{^h\!\hat{X}}W,\eta)+g(W,-A_{\eta}(^h\!\hat{X}))+g(W,\bar{\nabla}^{^\perp}_{^h\!\hat{X}}\eta)=0.
\end{equation}
Since $A_{W}(^h\!\hat{X})$ and $A_{\eta}(^h\!\hat{X})$ belong to $T_{x}S$ and on the other hand $\eta$ and $W$ are normal to $T_{x}S$ we have
\begin{equation*}
g(-A_{W}(^h\!\hat{X}),\eta)=g(W,-A_{\eta}(^h\!\hat{X}))=0.
\end{equation*}
By assumption the submanifold $S$ has parallel mean curvature vector field, that is, $\bar{\nabla}^{^\perp}_{^h\!\hat{X}}\eta=0$, hence $g(W,\bar{\nabla}^{^\perp}_{^h\!\hat{X}}\eta)=0$. Therefore (\ref{LL}) reduces to $g(\bar{\nabla}^{^\perp}_{^h\!\hat{X}}W,\eta)=0$. By  non-degeneracy of the metric tensor $g$ at $x\in S$ we have
\begin{equation}\label{5555}
\bar{\nabla}^{^\perp}_{^h\!\hat{X}}W=0.
\end{equation}
Similarly at $x\in S$ for vector $Y$ we obtain
\begin{equation}\label{555}
\bar{\nabla}^{^\perp}_{^h\!\hat{Y}}W=0.
\end{equation}
Therefore plugging (\ref{333}), (\ref{444}), (\ref{5555}) and (\ref{555}) in (\ref{222}) at $x\in S$ we obtain
\begin{equation*}
\nabla'_{^h\!\hat{X}}A=\nabla'_{^h\!\hat{Y}}A=0.
\end{equation*}
Now the Codazzi equation (\ref{codazzi}) implies
\begin{align}\label{codazzi2}
g(R(X,Y)W,X)=-g(A_{W}(T^{D}(^h\!\hat{X},^h\!\hat{Y})),X).
\end{align}
By assumption $A_{W}=g(W,\eta)I$. Thus we have
\begin{equation}\label{ne}
g(A_{W}(T^{D}(^h\!\hat{X},^h\!\hat{Y})),X)=g(T^{D}(^h\!\hat{X},^h\!\hat{Y}),X)g(W,\eta).
\end{equation}
Plugging (\ref{ne}) in (\ref{codazzi2}) we obtain
\begin{equation}\label{codazzi3}
g(R(X,Y)W,X)+g(T^{D}(^h\!\hat{X},^h\!\hat{Y}),X)g(W,\eta)=0.
\end{equation}
The first term $g(R(X,Y)W,X)$ is symmetric with respect to $Y$ and $W$, cf., \cite{AZ}, page 187. By means of the fact that $\eta$ is normal to $S$ we have $g(Y,\eta)=0$. Therefore we conclude
\begin{equation*}
g(T^{D}(^h\!\hat{X},^h\!\hat{Y}),X)g(W,\eta)=g(T^{D}(^h\!\hat{X},^h\!\hat{W}),X)g(Y,\eta)=0.
\end{equation*}
Thus (\ref{codazzi3}) becomes
\begin{equation*}
g(R(X,Y)W,X)=0.
\end{equation*}
Hence for orthonormal vectors $X,Y\in T_{x}S$ and $Z\in N_{x}S$ we have
\begin{equation*}
g(R(X,Y)Z,X)=0.
\end{equation*}
This completes the proof.
\end{proof}
\begin{lem}\label{MMMM}
Let $(M,F)$ be a Finsler manifold of dimension $n\geq 3$. If $g(R(X,Y)Z,X)=0$ whenever $X,Y$ and $Z$ are three orthonormal tangent vectors of $M$, then $M$ has constant flag curvature.
\end{lem}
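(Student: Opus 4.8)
The plan is to prove that, at each fixed flag direction, the sectional curvature $K_{2}$ is independent of the $2$-plane, and then to feed this into Akbar-Zadeh's generalization of Schur's theorem (Theorem \ref{SS}), using afterwards that the flag curvature is the sectional curvature $K_{2}(z,v,X)$ obtained by inserting the canonical section $v$. Since in the Finslerian setting $R$ and $g$ are taken at a fixed direction $z$, I would first fix such a $z$ and regard $g=g_{z}$ as an inner product and $R=R_{z}$ as an algebraic curvature operator on $T_{x}M$ with $x=\pi z$; the hypothesis then reads $g(R(X,Y)Z,X)=0$ for all $g_{z}$-orthonormal triples $X,Y,Z$.

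The first and decisive step is purely algebraic and uses only the tensoriality of $R$ in its arguments. Given $g_{z}$-orthonormal $X,Y,Z$, the vectors $\tfrac{1}{\sqrt{2}}(Y+Z)$ and $\tfrac{1}{\sqrt{2}}(Y-Z)$ are again orthonormal and orthogonal to $X$, so the hypothesis applied to this new triple, expanded by bilinearity in the two middle slots and stripped of the cross terms $g(R(X,Y)Z,X)$ and $g(R(X,Z)Y,X)$ (which vanish by hypothesis), gives
\[
0=\tfrac{1}{2}\big(g(R(X,Y)Y,X)-g(R(X,Z)Z,X)\big),
\]
that is, $K_{2}(z,X,Y)=K_{2}(z,X,Z)$. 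Combining this with the hypothesis in the form $g(R(X,E_{i})E_{j},X)=0$ for $i\neq j$, I would expand the sectional curvature of an arbitrary $2$-plane through $X$: writing a unit normal direction as $W=\sum_{i}c_{i}E_{i}$ in an orthonormal basis $\{E_{i}\}$ of $\{X\}^{\perp}$, the off-diagonal terms of $g(R(X,W)W,X)$ vanish and the diagonal ones are all equal, so $g(R(X,W)W,X)=\sum_{i}c_{i}^{2}\,\kappa(X)=\kappa(X)$. Hence every $2$-plane containing $X$ has one and the same sectional curvature $\kappa(X)$.

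The second step removes the dependence on $X$ by a short connectedness argument, valid because $n\geq 3$. For an orthonormal pair $X,X'$ the plane $\mathrm{span}(X,X')$ contains both vectors, so $\kappa(X)=\kappa(X')$; an arbitrary pair of unit vectors is joined through a unit vector orthogonal to both, which exists since $\dim\{X,X'\}^{\perp}\geq n-2\geq 1$. Thus $\kappa$ reduces to a single scalar and $K_{2}(z,P)$ is independent of the $2$-plane $P$. Theorem \ref{SS} now produces $R(X,Y)Z=K[g(Y,Z)X-g(X,Z)Y]$ with $K$ constant, and evaluating the identity $K_{2}(z,v,X)=K(z,v,X)$ on the canonical section shows the flag curvature equals this constant $K$, so $M$ has constant flag curvature.

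The point requiring the most care is the passage from pointwise isotropy to the algebraic form $R=K\,R_{0}$ with a genuinely constant $K$. It would be a mistake to think that equality of all sectional curvatures at a direction already forces the constant-curvature tensor: without a first-Bianchi-type identity for the $hh$-curvature of the Cartan connection this implication can fail, as the classical self-dual/anti-self-dual example already shows for curvature-like tensors that violate the Bianchi symmetry. Precisely this structural input, together with the Schur mechanism that upgrades pointwise constancy to global constancy of $K$, is what Theorem \ref{SS} supplies. The main discipline in the proof is therefore to route the final implication through Theorem \ref{SS} rather than re-deriving $R=K\,R_{0}$ by hand, and to keep every computation at the single fixed flag direction $z$, where $g_{z}$ is a bona fide inner product and the orthonormality in the hypothesis is meaningful.
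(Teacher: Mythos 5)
Your proof is correct and follows essentially the same route as the paper: the polarization with $Y'=(Y+Z)/\sqrt{2}$, $Z'=(Y-Z)/\sqrt{2}$ to obtain $g(R(X,Y)Y,X)=g(R(X,Z)Z,X)$, i.e.\ $K_{2}(z,X,Y)=K_{2}(z,X,Z)$, followed by Akbar-Zadeh's Schur-type Theorem \ref{SS} to pass to constant flag curvature. The two intermediate steps you supply --- expanding $g(R(X,W)W,X)$ over an orthonormal basis of $\{X\}^{\perp}$ to cover an arbitrary $2$-plane through $X$, and the connectedness argument in $X$ using $n\geq 3$ --- are precisely the details the paper's proof passes over silently in the jump from the orthonormal-triple identity to ``$K_{2}$ does not depend on the $2$-plane'', so your version is, if anything, the more complete one.
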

\begin{proof}
If we put
\begin{equation*}
Y'=\frac{(Y+Z)}{\sqrt{2}}\quad,\quad Z'=\frac{(Y-Z)}{\sqrt{2}},
\end{equation*}
then since $X,Y$ and $Z$ are orthonormal, the vectors $X,Y'$ and $Z'$ are again orthonormal. By means of assumption
\begin{equation*}
g(R(X,Y')Z',X)=0.
\end{equation*}
By replacing $Y'$ and $Z'$ we obtain
\begin{equation}\label{1000}
g(R(X,Y)Y,X)=g(R(X,Z)Z,X).
\end{equation}
From which we can conclude from (\ref{OOO}), $K_{2}(z,X,Y)=K_{2}(z,X,Z)$. Thus the sectional curvature $K_{2}$ does not depend on the 2-plane $P(X,Y)$. By generalization of Schur's Theorem \ref{SS}, $M$ has constant sectional curvature and hence constant flag curvature. This completes the proof.
\end{proof}
{\bf Proof of Theorem \ref{main1}. } Let $(M,F)$ be a Finsler manifold which satisfies the axiom of $r$-spheres. By means of Lemmas \ref{NNN} and \ref{MMMM} we conclude that $M$ has constant flag curvature.
\hspace{\stretch{1}}$\Box$


{\small \it Corresponding author, Behroz Bidabad,  bidabad@aut.ac.ir}\\
{\small \it Maral Khadem Sedaghat,
 m\_sedaghat@aut.ac.ir \\ Faculty of Mathematics and Computer Science, Amirkabir University of Technology (Tehran Polytechnic), Hafez Ave., 15914 Tehran, Iran.}
\end{document}